\newtheorem{theorem}{Theorem}
\newtheorem{lemma}{Lemma}
\newcommand{\Exk}{\exists\, k\ge 1: \:}
\newcommand{\mor}[2]{\Big\{
   \begin{aligned} 0 & \rightarrow #1\\[-.1cm] 1 & \rightarrow #2  \end{aligned}}
\title[A remarkable sequence]{A remarkable integer sequence related to $\pi$ and $\sqrt{2}$}
\author{Wieb Bosma, Michel Dekking, Wolfgang Steiner}
\address{\rm
Radboud University Nijmegen,\\
P.O.\ Box 9010, \\
6500 GL Nijmegen, the Netherlands;\\
{\tt bosma@math.ru.nl}\\
\sl\and\\
\rm
Delft University of Technology, \\Faculty EEMCS, P.O.~Box 5031,\\ 2600 GA
Delft, the Netherlands;\\
{\tt F.M.Dekking@math.tudelft.nl}\\
\sl\and\\
\rm
IRIF, CNRS UMR 8243,\\
Universit\'e Paris Diderot -- Paris 7,\\
Case 7014, 75205 Paris Cedex~13, France;\\
{\tt steiner@irif.fr}
}
\date{\today}
\begin{document}

\begin{abstract}
We prove that five ways to define entry {\bf A086377} in the OEIS
do lead to the same integer sequence.

\noindent{\em Key words.}{\;Sturmian word; morphic sequence; Beatty sequence; continued fraction.} 

\medskip


\end{abstract}

\maketitle

\section{Introduction}
\noindent
In September of 2003 Benoit Cloitre contributed a
sequence to the On-Line Encyclopedia of Integer Sequences \cite{A086377},
defined by him as $a_1=1$, and for $n\geq 2$ by
\begin{equation}
a_n= \left\{ \begin{aligned}
a_{n-1}+2&\quad{\rm{if\:}} n \: {\rm is\: in \: the \: sequence,}\\
a_{n-1}+2&\quad{\rm{if\:}} n \: {\rm and\:} n\!-\!1 \: {\rm are\: not \: in \: the \: sequence,}\\
a_{n-1}+3&\quad{\rm{if\:}} n \: {\rm is\: not\: in \: the \: sequence, but\:} n\!-\!1 \: {\rm is\: in \: the \: sequence.}
\end{aligned}\right.
\end{equation}
The first 25 values of this sequence are
$$1, 4, 6, 8, 11, 13, 16, 18, 21, 23, 25, 28, 30, 33, 35, 37, 40, 42, 45, 47, 49, 52, 54, 57, 59.$$
The purpose of this paper is to prove equivalence of five
ways to define this integer sequence, most of them already conjecturally
stated in the OEIS article on {\bf A086377}. Besides a simplified recursion, the alternatives
involve statements in terms of a morphic sequence, of a Beatty sequence, and of
approximation properties linking a classical continued fraction of $\frac{4}{\pi}$ to
that of $\sqrt{2}$.
\section{The theorem}
\begin{theorem}\label{thm}
The following five definitions produce the same integer sequence:
\begin{itemize}
\item[$(a_n)$] defined by $a_1=1$ and for $n\geq 2$:
\begin{equation*}\label{eq:A086377}
a_n= \left\{ \begin{aligned}
a_{n-1}+2&\quad{\rm{if\:}} n \: {\rm is\: in \: the \: sequence,}\\
a_{n-1}+2&\quad{\rm{if\:}} n \: {\rm and\:} n\!-\!1 \: {\rm are\: not \: in \: the \: sequence,}\\
a_{n-1}+3&\quad{\rm{if\:}} n \: {\rm is\: not\: in \: the \: sequence, but\:} n\!-\!1 \: {\rm is\: in \: the \: sequence.}
\end{aligned}\right.
\end{equation*}
\item[$(b_n)$] defined by $b_1=1$ and for $n\geq 2$:
\begin{equation*}\label{eq:short}
b_n= \left\{ \begin{aligned}
b_{n-1}+2&\quad{\rm{if\:}} n\!-\!1\: {\rm is\: not \: in \: the \: sequence,}\\
b_{n-1}+3&\quad{\rm{if\:}} n\!-\!1 \: {\rm is\: in \: the \: sequence.}
\end{aligned}\right.
\end{equation*}
\item[$(c_n)$] for $n\geq 1$ defined as the position of the $n$-th zero in
the fixed point of the morphism
\begin{equation*}\label{eq:morphism}
\phi:\ \left\{ \begin{aligned}
0&\mapsto 011\\
1&\mapsto 01
\end{aligned}\right.;
\end{equation*}
\item[$(d_n)$] defined by $d_n=\left\lfloor (1+\sqrt{2})\cdot n -\frac12\sqrt{2}\right\rfloor$ for $n\geq 1$;
\item[$(e_n)$] defined by $e_n=\lceil r_n\rfloor=\lfloor r_n+\frac{1}{2}\rfloor$, with $r_1=\dfrac{4}{\pi}$ and $r_{n+1}=\dfrac{n^2}{r_{n}-(2n-1)}$, for $n\geq 1$.
\end{itemize}
\end{theorem}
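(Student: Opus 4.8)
The plan is to route all five descriptions through a single self-referential recursion together with an accompanying uniqueness lemma. Call a sequence $(x_n)$ \emph{canonical} if $x_1=1$ and $x_{n+1}=x_n+2+[\,n\in\{x_m\}\,]$ for $n\ge1$, where $[\cdot]$ is the Iverson bracket. First I would record that such a recursion has a unique solution: every canonical-type sequence has increments in $\{2,3\}$, so $x_n\ge 2n-1\ge n$; hence membership ``$n\in\{x_m\}$'' is decided by $x_1,\dots,x_n$ alone, and strong induction pins the sequence down. The equality $(a_n)=(b_n)$ then follows at once: the three cases defining $(a_n)$ already force increments in $\{2,3\}$, so no two consecutive integers occur as values; consequently ``$n-1$ in the sequence'' forces ``$n$ not in the sequence'', which collapses the three cases of $(a_n)$ into the two cases of $(b_n)$, and $(b_n)$ is literally the canonical recursion.

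Next I would show that $(c_n)$ is canonical. The morphism $\phi$ is growing and $\phi(0)$ begins with $0$, so the fixed point $u=\lim_k\phi^k(0)$ exists and decomposes as a concatenation of blocks $\phi(u_1)\phi(u_2)\cdots$. Each image $\phi(0)=011$ and $\phi(1)=01$ contains exactly one $0$, located at its first letter; hence the zeros of $u$ are precisely the block-starts, and the $n$-th zero sits at position $c_n=1+\sum_{i<n}|\phi(u_i)|$. Therefore $c_{n+1}-c_n=|\phi(u_n)|=2+[\,u_n=0\,]$, and $u_n=0$ holds exactly when $n$ is the position of a zero, i.e.\ when $n\in\{c_m\}$. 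With $c_1=1$ this is the canonical recursion, so $(c_n)=(b_n)$.

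To place $(d_n)$ I would verify that it too is canonical. Writing $\beta=1+\sqrt2\in(2,3)$ and $\theta=\tfrac12\sqrt2$, one has $d_1=1$ and $d_{n+1}-d_n=2+\lfloor\{\beta n-\theta\}+(\sqrt2-1)\rfloor$, so the increment is $3$ exactly when $\{\beta n-\theta\}\ge 2-\sqrt2$. Independently, $n\in\{d_m\}$ iff the interval $[(n+\theta)/\beta,(n+1+\theta)/\beta)$, of length $1/\beta=\sqrt2-1<1$, contains an integer, which is again a single fractional-part condition. The remaining point is that these two conditions coincide; this is a Fraenkel-type identity that I would derive from $\beta=2+1/\beta$ (equivalently $1/\beta=\sqrt2-1$), precisely the relation encoding the purely periodic continued fraction $\beta=[2;\overline{2}]$. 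Alternatively, $\phi$ is a Sturmian morphism whose incidence matrix has eigenvalue $1+\sqrt2$, so $u$ is a Sturmian word and the positions of its zeros form a Beatty sequence, with slope and two initial terms fixing the intercept. Either way $(d_n)=(b_n)$.

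Finally, and this is where I expect the real difficulty, I would connect $(e_n)$. Rewriting the recursion as $r_n=(2n-1)+n^2/r_{n+1}$ exhibits $r_1=4/\pi$ as the value of the classical continued fraction
\begin{equation*}
\frac4\pi=1+\cfrac{1^2}{3+\cfrac{2^2}{5+\cfrac{3^2}{7+\cdots}}},
\end{equation*}
so the $r_n$ are its well-defined positive tails with $r_n>2n-1$. The autonomous model $x=2+1/x$ has fixed point $\beta=1+\sqrt2$, and substituting $r_n=\beta(n-\tfrac12)+\delta_n$ into the recursion yields $\delta_n=-\delta_{n+1}/\beta^2+O(1/n)$, whence $\delta_n\to0$; a short computation gives the exact identity $r_n+\tfrac12=(\beta n-\theta)+\delta_n$, so $e_n=\lfloor r_n+\tfrac12\rfloor$ equals $d_n=\lfloor\beta n-\theta\rfloor$ provided $\delta_n$ never carries $\beta n-\theta$ across an integer. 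The obstacle is that the analysis only delivers $\delta_n=\Theta(1/n)$, the very same order as the gap $\mathrm{dist}(\beta n-\theta,\mathbf Z)$, which is $\asymp 1/n$ because $\sqrt2$ has bounded partial quotients. Thus crude majorization does not suffice: the crux is a quantitative reconciliation of these two $\Theta(1/n)$ quantities---sharpening the continued-fraction asymptotics of $r_n$ to an explicit constant and sign and comparing it against the exact badly-approximable bound for $\sqrt2$---so as to guarantee $|\delta_n|<\min(\{\beta n-\theta\},\,1-\{\beta n-\theta\})$ for every $n$, with the finitely many small $n$ checked by hand.
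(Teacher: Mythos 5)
Your reduction of $(a_n),(b_n),(c_n),(d_n)$ to a single ``canonical'' recursion $x_{n+1}=x_n+2+[\,n\in\{x_m\}\,]$ with a uniqueness lemma is sound, and the $(a)=(b)$ and $(b)=(c)$ steps are essentially the paper's arguments. For $(d_n)$ you take a genuinely different route: the paper identifies the fixed point of $\phi$ (up to exchange of letters) with the Sturmian word $\mathrm{St}\big(\sqrt2-1,\,1-\frac12\sqrt2\big)$ via de~Bruijn's result, respectively the Lothaire parameter maps for elementary Sturmian morphisms, and then converts positions of $1$'s into a Beatty sequence by an inhomogeneous version of Lemma~9.1.3 of Allouche--Shallit; your plan is instead to check directly that $\lfloor(1+\sqrt2)n-\frac12\sqrt2\rfloor$ satisfies the canonical recursion. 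That is more elementary and does go through: using $1/\beta=\beta-2=\sqrt2-1$ and $\{-x\}=1-\{x\}$ for irrational $x$, both your ``increment is $3$'' condition and your ``$n$ is a value'' condition reduce to the single inequality $\{\sqrt2(n-\frac12)\}>2-\sqrt2$. But you must actually write out that half-page computation; as it stands the coincidence of the two fractional-part conditions is only announced, and your alternative (``two initial terms fix the intercept'') is not an argument, since two terms only confine the intercept to an interval.

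The step $(d_n)=(e_n)$ is where your proposal stops being a proof: you correctly diagnose that $\delta_n:=r_n+\frac12-\big(\alpha n-\frac1{\sqrt2}\big)$ and the distance from $\alpha n-\frac1{\sqrt2}$ to the nearest integer are both $\Theta(1/n)$, and then you leave exactly that comparison open. Two concrete ingredients are missing. First, the sign and the exact constant of the error term: the paper proves (after establishing existence and uniqueness of the positive solution of the recurrence by a contraction/nested-interval argument, which you also take for granted) that $0<r_n-\alpha n+c<\frac{(\alpha-c)(c-1)}{\alpha n}$ with $\alpha=1+\sqrt2$, $c=\frac12+\frac1{\sqrt2}$ and $(\alpha-c)(c-1)=\frac14$; hence $\delta_n\in\big(0,\frac{1}{4\alpha n}\big)$ is one-sided, so only the distance to the integer \emph{above} $\alpha n-\frac1{\sqrt2}$ matters --- your two-sided condition $|\delta_n|<\min(\{\cdot\},1-\{\cdot\})$ is both stronger than needed and unobtainable without first pinning down the sign. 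Second, the matching Diophantine lower bound: writing $2\big(\alpha n-\frac1{\sqrt2}\big)=\alpha(2n-1)+1$ and using that the denominators $q_{2k-1}$ of $[2;2,2,\dots]$ are \emph{even} --- so the odd integer $2n-1$ is never one of the multiples $jq_{2k-1}$, which are the only indices $m$ where $m(\lceil\alpha m\rceil-\alpha m)$ drops below $1-1/\alpha$ --- one gets $\big\lceil\alpha n-\frac1{\sqrt2}\big\rceil-\big(\alpha n-\frac1{\sqrt2}\big)>\frac{\alpha-1}{4\alpha n}=\frac{\sqrt2}{4\alpha n}>\frac{1}{4\alpha n}$. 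The two constants differ only by the factor $\sqrt2$, and both estimates are sharp up to $o(1/n)$ along infinitely many $n$, so your fallback of ``checking the finitely many small $n$ by hand'' cannot work: the inequality has to be won by exact constants at \emph{every} $n$. The paper's closing remark --- that the analogous identity already fails for $h=1$ at $n=137$ --- shows that no soft asymptotic argument can close this gap.
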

\noindent
At first we found it hard to believe the equivalence of these definitions, but
a verification of the first 130000 terms ($a_{130000}=
313847$) convinced us to look for proofs.

\section{Simplification and a morphic sequence}
\noindent
To show that $(b_n)$ defines the same sequence as $(a_n)$, simply note
that $a_n-a_{n-1}\ge 2$ for all $n$: hence if $n$ is in the sequence
then $n-1$ is not, and we can combine the first two cases
in Equation~(\ref{eq:A086377}).

In a comment to sequence {\bf A086377}, Clark Kimberling asked if the integers
in this sequence coincide with the positions of the zeroes in sequence
{\bf A189687}, which is the fixed point of the substitution
\begin{equation*}
\phi:\ \left\{ \begin{aligned}
0&\mapsto 011\\
1&\mapsto 01
\end{aligned}\right.,
\end{equation*}
defining the sequence $(c_n)$ in the Theorem.
It is not hard to see that this indeed produces the same as sequence
$(b_n)$; repeatedly applying the morphism $\phi$ to $0$ produces after a few steps
the initial segment
$$0110101011010110101101010110101101010110101101010110101101011\cdots.$$
The position $c_n$ of the $n$-th zero is 2 ahead of $c_{n-1}$ precisely
when the latter is followed by a single~1, that is, when there is a 1
at position~$n-1$, and it is 3 ahead of $c_{n-1}$ if that zero is followed by~11, which means that there was a 0 at position~$n-1$. Thus the rule is
exactly that defining~$(b_n)$.

\section{Beatty sequence}
\noindent
Every pair of real numbers $\alpha$ and $\beta$ determines a Beatty sequence by
$${\rm B}(\alpha, \beta)_n :=\lfloor n\alpha+\beta \rfloor, \qquad n=1,2,\dots .$$
The numbers $\alpha$ and $\beta$ also determine sequences by
$$  {\rm St}(\alpha, \beta)_n :=\lfloor (n+1)\alpha+\beta \rfloor-\lfloor n\alpha+\beta \rfloor, \qquad n=1,2,\dots ,$$
which is a Sturmian sequence (of slope $\alpha$), over the alphabet $\{0, 1\}$,
provided that $0\leq\alpha<1$.

Thus Sturmian sequences are first differences of Beatty sequences (when $0\le \alpha<1$), but Beatty sequences and Sturmian sequences are also linked in another way.

\begin{lemma}\label{lem:pos1} Let $\alpha>1$ be irrational, and let $(s_n)_{n\ge 1}$ be given by
$s_n={\rm St}(\frac{1}{\alpha}, -\frac{\beta}{\alpha})_n$, for some real number $\beta$ with $\alpha+\beta> 1$ and such that $k\alpha+\beta \not\in \mathbb{Z}$
for all positive integers $k$. Then ${\rm B}(\alpha, \beta)$
is the sequence of positions of $1$ in $(s_n)$.
\end{lemma}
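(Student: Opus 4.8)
The plan is to make the values of $(s_n)$ completely explicit and then read off the positions of the $1$'s as a single arithmetic condition. Unwinding the definition gives
\[
s_n=\left\lfloor\frac{n+1-\beta}{\alpha}\right\rfloor-\left\lfloor\frac{n-\beta}{\alpha}\right\rfloor ,
\]
and since $\frac1\alpha<1$ each term is $0$ or $1$, with $s_n=1$ exactly when the interval $\bigl(\frac{n-\beta}{\alpha},\frac{n+1-\beta}{\alpha}\bigr]$ contains an integer $m$. Multiplying the inequalities $\frac{n-\beta}{\alpha}<m\le\frac{n+1-\beta}{\alpha}$ by $\alpha>0$ and adding $\beta$, I would rewrite this as $n<m\alpha+\beta\le n+1$. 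Thus the first, and main computational, step is the clean equivalence
\[
s_n=1\iff m\alpha+\beta\in(n,n+1]\ \text{ for some integer }m .
\]

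The second step is to convert this into a bijection with the Beatty values. The hypothesis that $k\alpha+\beta\notin\mathbb Z$ for every $k\ge1$ guarantees that the relevant $m\alpha+\beta$ is never an integer, so $m\alpha+\beta\in(n,n+1]$ forces $n=\lfloor m\alpha+\beta\rfloor={\rm B}(\alpha,\beta)_m$; and the intervals $(n,n+1]$ being disjoint makes the integer $m$ attached to a given $1$ unique. Conversely, for any integer $m$ I would set $n=\lfloor m\alpha+\beta\rfloor$ and check directly that $m$ lies in $\bigl(\frac{n-\beta}{\alpha},\frac{n+1-\beta}{\alpha}\bigr]$: the two required inequalities are $\lfloor m\alpha+\beta\rfloor<m\alpha+\beta$ (strict because $m\alpha+\beta\notin\mathbb Z$) and $m\alpha+\beta-1\le\lfloor m\alpha+\beta\rfloor$, both of which are automatic. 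Hence the positions of $1$ in $(s_n)$ are exactly the integers $\lfloor m\alpha+\beta\rfloor$ for those $m$ with $\lfloor m\alpha+\beta\rfloor\ge1$, and since $\alpha>1$ the map $m\mapsto\lfloor m\alpha+\beta\rfloor$ is strictly increasing, so these $1$'s appear in order of increasing $m$.

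It then remains only to align the indexing, and this boundary bookkeeping is where I expect the real obstacle to lie. A value $n=\lfloor m\alpha+\beta\rfloor$ satisfies $n\ge1$ precisely when $m\alpha+\beta>1$; the stated hypothesis $\alpha+\beta>1$ is exactly the statement that $m=1$ already qualifies, so the $m=1$ term contributes the first $1$, at ${\rm B}(\alpha,\beta)_1$. The delicate point is to rule out the nonpositive indices: for $m\le0$ one has $m\alpha+\beta\le\beta$, so none of these yields a position $\ge1$ as soon as $\beta\le1$, which holds throughout the range of interest (for instance $\beta=-\tfrac12\sqrt2$ in the application to $(d_n)$). Granting this, the admissible indices are precisely $m=1,2,3,\dots$, and reading them off in increasing order shows that the $n$-th $1$ of $(s_n)$ occurs at $\lfloor n\alpha+\beta\rfloor={\rm B}(\alpha,\beta)_n$, which is the assertion of the lemma.
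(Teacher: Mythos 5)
Your proof is correct and follows essentially the same route as the paper's: both reduce $s_n=1$ to the existence of an integer $m$ with $m\alpha+\beta$ lying in a unit interval at $n$, hence to $n=\lfloor m\alpha+\beta\rfloor$, the paper packaging this as a chain of equivalences. Your extra care in excluding the indices $m\le 0$ --- which, as you note, genuinely requires a bound like $\beta<1$ that is not among the stated hypotheses but holds in the application --- and in checking that the $1$'s occur in order of increasing $m$ addresses points that the paper's chain of equivalences passes over silently (its backward step tacitly assumes the integer $k=\lfloor\frac{n+1-\beta}{\alpha}\rfloor$ is positive).
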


\begin{proof} This is a generalization of Lemma 9.1.3 in \cite{Allou}, from homogeneous to inhomogeneous Sturmian sequences.
 The proof also generalizes:
\begin{eqnarray*}
\Exk n = \lfloor k\alpha + \beta\rfloor & \Longleftrightarrow & \Exk n \le  k\alpha + \beta < n+1\\
& \Longleftrightarrow & \Exk  \frac{n-\beta}{\alpha} \le k < \frac{n+1-\beta}{\alpha}\\
& \Longleftrightarrow & \Exk \bigg\lfloor\frac{n-\beta}{\alpha}\bigg\rfloor=k-1 {\rm \: and \:} \bigg\lfloor\frac{n-\beta}{\alpha}+\frac1{\alpha}\bigg\rfloor=k\\
& \Longleftrightarrow & \bigg\lfloor\frac{n+1}{\alpha}-\frac{\beta}{\alpha}\bigg\rfloor- \bigg\lfloor\frac{n}{\alpha}-\frac{\beta}{\alpha}\bigg\rfloor=1\\
& \Longleftrightarrow & {\rm St}\bigg(\frac1\alpha, -\frac{\beta}{\alpha}\bigg)_n=1. \hspace{12em} \qedhere
\end{eqnarray*}
\end{proof}
\noindent
Our goal in this section is to prove that $(c_n)=(d_n)$. 
Let $\psi$ be the morphism $\psi: \mor{10}{100}$, and
let $w$ be the fixed point. Then
$$w = 1001010100101001010010101001010010101001010010101001010010100\cdots,$$
which is the mirror image  of $\phi$ in the definition of $(c_n)$, i.e., $\psi=E\phi E$, with $E$ the exchange morphism given by $E(0)=1, E(1)=0$.
So the positions of $0$ in the fixed point of $\phi$ correspond to the positions of $1$ in the fixed point $w$ of $\psi$.

Let $\alpha_d=1+\sqrt{2}$ and $\beta_d=-\frac12\sqrt{2}$; then
$d_n={\rm B}(\alpha_d, \beta_d)_n$, for $n\geq1$.

Applying  Lemma~\ref{lem:pos1}, we deduce that $d_n$ also equals the position
of the $n$-th $1$ in the Sturmian sequence ${\rm St}(\alpha,\beta)$, generated by
$$\alpha=\frac1{\alpha_d}=\sqrt{2}-1, \; \beta=\frac{-\beta_d}{\alpha_d}= 1-\frac12\sqrt{2}.   $$

\begin{lemma}\label{lem:psi} ${\rm St}\big(\sqrt{2}-\!1, 1-\frac12\sqrt{2}\big)=w$. 
\end{lemma}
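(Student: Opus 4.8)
My plan is to show that the Sturmian word $s:={\rm St}\big(\sqrt2-1,\,1-\frac12\sqrt2\big)$ is a fixed point of $\psi$; since $\psi$ is length-increasing and $\psi(1)=100$ begins with~$1$, the iterates $\psi^n(1)$ form nested prefixes converging to a \emph{unique} fixed point beginning with $1$, namely $w$, so proving $\psi(s)=s$ will finish the argument. Writing $\alpha=\sqrt2-1$ and $\beta=1-\frac12\sqrt2$, I first record the geometric picture: with $x_n=\{n\alpha+\beta\}$ (fractional part), one has $s_n=1$ exactly when $x_n\in I_1=(1-\alpha,1)$ and $s_n=0$ when $x_n\in I_0=(0,1-\alpha)$, so $s$ codes the orbit of the rotation $R_\alpha\colon x\mapsto x+\alpha \pmod 1$. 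Since $x_n>1-\alpha$ forces $x_{n+1}=x_n+\alpha-1\in(0,\alpha)\subset I_0$, every $1$ is followed by a $0$, so $s$ factors uniquely into the blocks $10$ and $100$, which are precisely $\psi(0)$ and $\psi(1)$. Thus the identity $\psi(s)=s$ is equivalent to the statement that the derived word recording the block sequence (via $10\mapsto0$, $100\mapsto1$) is again $s$.

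To analyze the derived word I will compute the first-return map $T$ of $R_\alpha$ to $I_1$. A short calculation with $\alpha=\sqrt2-1$ shows the return time is $2$ on the right piece $R=(5-3\sqrt2,1)$ (giving the block $10$) and $3$ on the left piece $L=(1-\alpha,\,5-3\sqrt2)$ (giving the block $100$), with $T(x)=x+2\alpha-1$ on $R$ and $T(x)=x+3\alpha-1$ on $L$; hence $T$ is a two-interval exchange of $I_1$. The heart of the argument is to recognize this induced exchange, after rescaling to the unit interval, as the \emph{same} rotation $R_\alpha$. Because the letter $1$ is the right interval $I_1$ of length $\alpha$ in the original coding but the left piece $L$ after inducing, the correct conjugacy is orientation-reversing: I will use $g(x)=(1-x)/\alpha$, which maps $I_1$ onto $(0,1)$, sends $L$ onto $I_1$ and $R$ onto $I_0$ (matching the two codings), and—using the defining relation $1/\alpha=1+\alpha$, i.e.\ $\alpha^2+2\alpha-1=0$—satisfies $g(T(x))\equiv g(x)+\alpha \pmod 1$. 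Therefore $g$ carries the coded induced orbit to the coded orbit of $R_\alpha$, so the derived word equals ${\rm St}(\alpha,\beta')$ for the intercept determined by the image of the starting point.

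It then remains only to check $\beta'=\beta$, which reduces to the single equation $g(x_1)=x_1$: since both orbits obey $x\mapsto x+\alpha$, equality of the first points forces equality of all of them, hence of the two coded words. Here the special value of $\beta$ enters, via $x_1=\{\alpha+\beta\}=\frac12\sqrt2=1/(1+\alpha)$, which is exactly the fixed point of $g$; indeed $g\big(\frac12\sqrt2\big)=\big(1-\frac12\sqrt2\big)/(\sqrt2-1)=\frac12\sqrt2$. This confirms that the derived word equals $s$, so $\psi(s)=s$ and therefore $s=w$. I expect the main obstacle to be getting the induction bookkeeping exactly right—in particular noticing that self-similarity is realized only after the \emph{orientation-reversing} rescaling $g$, and verifying that $\beta=1-\frac12\sqrt2$ is precisely the intercept fixed by this renormalization; the remaining steps are routine computations resting on $\alpha^2+2\alpha=1$.
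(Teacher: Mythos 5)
Your argument is correct, and it takes a genuinely different route from the paper. The paper invokes de Bruijn's 1981 result and then gives a ``modern'' proof at the symbolic level: it factors $\psi=\psi_2\psi_1E$ into elementary Sturmian morphisms, quotes Lothaire's lemmas to conclude that the fixed point $w$ is Sturmian, composes the associated 2D fractional linear maps on the parameter space to get $T_{210}(x,y)=\big(\frac{1}{2+x},\frac{2+x-y}{2+x}\big)$, and reads off $(\alpha,\beta)$ as the fixed point of $T_{210}$ (shifting $\beta$ by $\alpha$ to account for indexing from $n=1$). You work instead at the level of the circle rotation itself: you compute the first-return map of $R_\alpha$ to the interval $I_1$ coding the letter $1$, observe that the return words are exactly $\psi(0)=10$ and $\psi(1)=100$, conjugate the induced two-interval exchange back to $R_\alpha$ by the orientation-reversing rescaling $g(x)=(1-x)/\alpha$, and check that the starting point $x_1=\frac{1}{\sqrt2}$ is fixed by $g$, so that the derived word equals $s$ and hence $\psi(s)=s=w$ by uniqueness of the fixed point. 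Your orientation reversal is precisely the dynamical counterpart of the exchange morphism $E$ in the paper's factorization, and your fixed-point condition $g(x_1)=x_1$ plays the role of the paper's fixed point of $T_{210}$; what your version buys is self-containedness (no appeal to \cite{Lothaire}), at the cost of more explicit interval bookkeeping, all of which I have checked ($L=(2-\sqrt2,\,5-3\sqrt2)$ has return time $3$, $R=(5-3\sqrt2,\,1)$ has return time $2$, $g(L)=I_1$, $g(R)=I_0$, and $g\circ T=R_\alpha\circ g$). One slip to correct: the defining relation is $1/\alpha=2+\alpha$ (equivalently $\alpha^2+2\alpha-1=0$, which you do state and use correctly elsewhere), not $1/\alpha=1+\alpha$; the computation $x_1=1/(1+\alpha)=\frac{1}{\sqrt2}$ is nevertheless right. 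You should also say explicitly that $s_1=1$ (which follows from $x_1=\frac{1}{\sqrt2}\in I_1$), since the factorization into return blocks must begin at position $1$ for the derived word to line up as claimed.
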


\begin{proof} This was already proved by Nico de Bruijn in 1981 (\cite{Bruijn de}), where it is the main example. Note, however, that our Sturmian sequences
start at $n=1$.

For a `modern' proof as suggested by \cite[Section 4]{Dekk-Stur}, let $\psi_1$ and $\psi_2$ be the elementary morphisms given by $\psi_1(0)=01, \psi_1(1)=0$, and $\psi_2(0)=10, \psi_2(1)=0$.
Then $\psi=\psi_2\psi_1E$. This implies that  the fixed point $w$ of $\psi$ is a Sturmian word (see \cite[Corollary 2.2.19]{Lothaire}). To find its parameters $(\alpha,\beta)$, use the 2D fractional linear maps that describe how the parameters of a Sturmian word change when one applies an elementary morphism.
For Sturmian words starting at $n=0$, the maps for $E, \psi_1$ and $\psi_2$ are\footnote{Actually there is a subtlety here involving the ceiling representation of a Sturmian sequence, but that does not apply in our case since $\beta \not\in \mathbb{Z} \alpha + \mathbb{Z}$.} respectively (see \cite[Lemma 2.2.17, Lemma 2.2.18, Exercise 2.2.6]{Lothaire})
\begin{equation*}T_0(x,y)=(1-x,1-y),\; T_1(x,y)=\left(\frac{1-x}{2-x},\frac{1-y}{2-x}\right), \;
T_2(x,y)= \left(\frac{1-x}{2-x},\frac{2-x-y}{2-x}\right).
\end{equation*}
The change of parameters by applying $\psi$ is therefore the composition
\begin{equation*}  T_{210}(x,y):=T_2T_1T_0(x,y)= \left(\frac{1}{2+x},\frac{2+x-y}{2+x}\right).
\end{equation*}
But the parameters $\alpha$ and $\beta$ of $w$ do \emph{not} change when one applies $\psi$. This means that $(\alpha,\beta)$ is a fixed point of $T_{210}$, and one easily computes $\alpha=\sqrt{2}-1$, and then $\beta=\frac12 \sqrt{2}$.
Since our Sturmian words start at $n=1$, we have to subtract $\alpha$ from $\beta$ and obtain that $w = {\rm St}\big(\sqrt{2}-\!1, 1-\frac12\sqrt{2}\big)$.
\end{proof}

\section{Converging recurrence}
\noindent
In a comment to entry {\bf A086377}, Joseph Biberstine conjectured a
beautiful connection with the infinite continued fraction expansion
$$\frac{4}{\pi}=
1+{\strut 1^2\over
\displaystyle{3+{\strut 2^2\over
\displaystyle{5+{\strut 3^2\over
\displaystyle{7+{\strut 4^2\over
\displaystyle{9+{\strut 5^2\over
\displaystyle{11+{\strut 6^2\over\ddots}}}}}}}}}}},$$
derived from the arctangent function expansion.
If we define $R_n$ for $n\geq 1$ by
$$R_n=
2n-1+{\strut n^2\over
\displaystyle{2n+1+{\strut (n+1)^2\over
\displaystyle{2n+3+{\strut (n+2)^2\over
\displaystyle{2n+5+{\strut (n+3)^2\over\ddots}}}}}}},$$
then $R_1 = 4/\pi$ and $\displaystyle R_n=2n-1+\frac{n^2}{R_{n+1}}$.
We see that
$$\frac{R_n}{n}\frac{R_{n+1}}{n+1}-\frac{2n-1}{n}\frac{R_{n+1}}{n+1}-
\frac{n^2}{n(n+1)}=0.$$
This implies that if $R_n/n$ converges, for $n\rightarrow\infty$,
then it does so to a (positive) zero of $x^2-2x-1$, that is, to $1+\sqrt{2}$;
cf.~Lemma \ref{lemma:rn} below.

We consider now, conversely and slightly more generally, for any real $h\geq 1$,
a sequence of positive numbers $r_n$ satisfying
\begin{equation} \label{e:r}
r_n = h n - 1 + \frac{n^2}{r_{n+1}}
\end{equation}
for $n \ge 1$.
We first show that this sequence is unique, i.e., there is a unique $r_1 > 0$ such that $r_n > 0$ for all $n \ge 1$, and give estimates for its terms.

\begin{lemma}\label{lemma:rn}
For each $h \ge 1$, there is a unique sequence of positive real numbers $(r_n)_{n\ge1}$ satisfying the recurrence~\eqref{e:r}.
Moreover, we have for this sequence, for all $n \ge 1$,
\begin{equation} \label{e:rn}
0 < r_n - \alpha n + c < \frac{(\alpha-c)(c-1)}{\alpha n}
\end{equation}
with $\alpha = \dfrac{h+\sqrt{h^2+4}}{2}$ and $c = \dfrac{1+\alpha}{2\alpha-h} = \dfrac{1}{2} + \dfrac{h+2}{2\sqrt{h^2+4}}$.
\end{lemma}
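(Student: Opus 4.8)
The plan is to track the error $e_n := r_n - \alpha n + c$ from the conjectured asymptotic line. The key algebraic facts are that $\alpha$ is the positive root of $x^2-hx-1$, so $\alpha-h=1/\alpha$, and that $c$ satisfies $\alpha^2(c-1)=\alpha-c$ (equivalently $c=\alpha(1+\alpha)/(\alpha^2+1)$); these are exactly the conditions making the line $\alpha n-c$ a consistent profile for \eqref{e:r}. Substituting $r_n=\alpha n-c+e_n$ into \eqref{e:r} and simplifying with these identities, I would obtain a backward recurrence $e_n=\Phi_n(e_{n+1})$, where
$$\Phi_n(e)=(c-1)-\frac{n(\alpha-c+e)}{\alpha(\alpha n+\alpha-c+e)}.$$
Each $\Phi_n$ is a decreasing Möbius map on $e>-(\alpha-c)$, with $\Phi_n(0)=\dfrac{(\alpha-c)^2}{\alpha^2(\alpha n+\alpha-c)}>0$ and unique zero at $e^*_n=\dfrac{(\alpha-c)(c-1)\alpha}{n-(c-1)\alpha}>0$ (note $(c-1)\alpha<1\le n$).

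Writing $B_n:=\frac{(\alpha-c)(c-1)}{\alpha n}$ for the claimed upper bound, I would next verify the two inequalities $\Phi_n(0)<B_n$ and $B_{n+1}<e^*_n$; using $\alpha^2(c-1)=\alpha-c$ both collapse to manifestly true statements (the first to $0<(c-1)\alpha(\alpha-c)$, the second to $n-(c-1)\alpha<\alpha^2(n+1)$). Since $\Phi_n$ is decreasing, these give the absorbing step: if $0\le e_{n+1}\le B_{n+1}$ then $\Phi_n(B_{n+1})\le e_n\le\Phi_n(0)$, whence $0<e_n<B_n$ \emph{strictly}, even from the non-strict hypothesis. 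Thus the target bound propagates downward from index $n+1$ to $n$.

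The main obstacle is that \eqref{e:r} runs backward, so there is no initial term from which to launch this induction, and existence and uniqueness must be settled alongside. I would handle all of this through the finite truncations $r^{(N)}$ obtained by prescribing a positive tail at index $N$ and iterating \eqref{e:r} downward; each stays positive, and the elementary estimates $r^{(N)}_n>hn-1$ together with $r^{(N)}_{n+1}>h(n+1)-1\ge n$ give the crude bound $hn-1<r^{(N)}_n<(h+1)n-1$, which bootstraps once more to $r^{(N)}_n>\lambda n$ with some $\lambda>1$ for every $h\ge1$; the same bounds hold for any genuine positive solution. The engine for both uniqueness and convergence is the telescoped identity
$$|r_1-\tilde r_1|=\frac{((N-1)!)^2}{\prod_{m=2}^{N}r_m\,\tilde r_m}\,|r_N-\tilde r_N|,$$
valid for any two sequences obeying \eqref{e:r} down to level $1$; feeding in $r_m,\tilde r_m>\lambda m$ and $|r_N-\tilde r_N|=O(N)$ bounds the right-hand side by $O\big((h+1)/(\lambda^{2(N-1)}N)\big)\to0$. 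Letting $N\to\infty$ forces any two positive solutions to coincide (uniqueness) and shows the truncations are Cauchy at each fixed index, producing a limit $r_n$ that inherits positivity and \eqref{e:r} (existence).

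Finally, to pin down the sharp bound for this limit, I would use the truncation whose tail sits exactly on the line, $r^{(N)}_N=\alpha N-c$, so that $e^{(N)}_N=0\in[0,B_N]$; the absorbing step then yields $0<e^{(N)}_n<B_n$ for every $n<N$. Passing to the limit gives $0\le e_n\le B_n$ for the true sequence, and one further application of the absorbing step (which outputs strict inequalities from the non-strict hypothesis) upgrades this to the claimed $0<e_n<B_n=\frac{(\alpha-c)(c-1)}{\alpha n}$ for all $n\ge1$. I expect the only genuine subtlety to be the bookkeeping that interlaces existence, the a priori $\Theta(n)$ bounds, and the contraction estimate; once the identity $\alpha^2(c-1)=\alpha-c$ is in hand, the remaining algebraic verifications are short.
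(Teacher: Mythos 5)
Your proof is correct and follows essentially the same route as the paper: the same a priori linear bounds and telescoping contraction give uniqueness, your absorbing step for $e_n=r_n-\alpha n+c$ is exactly the paper's pair of inequalities $f_n\big(\alpha(n+1)-c\big)<\alpha n-c+B_n$ and $f_n\big(\alpha(n+1)-c+B_{n+1}\big)>\alpha n-c$ rewritten in error coordinates, and existence comes from the same downward iteration of the maps $f_n$ (nested intervals in the paper, truncated tails in your version). The only nit is that $r_n>\lambda n$ with $\lambda>1$ cannot hold for \emph{all} $n$ (for $h=1$ one has $r_1=\alpha-c+O(1)\approx 0.447<1$); it holds for all sufficiently large $n$, which is all your product estimate actually needs and is how the paper states it.
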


\begin{proof}
Let $f_n(x) = hn-1+n^2/x$.
Suppose that a sequence of positive numbers $r_n$ satisfies~\eqref{e:r}, i.e., that $f_n(r_{n+1}) = r_n$ for all $n \ge 1$.
Then we have $r_n > hn-1$ and thus $r_n < (h+1/h) n$ for all $n \ge 1$.
We deduce that there exists some $\delta > 0$ and $N \ge 1$ such that $r_n > (h+\delta) n$ for all $n \ge N$.
Suppose that there is another sequence of positive numbers $\tilde{r}_n$ satisfying~\eqref{e:r}.
Since $|f_n'(x)| = |n/x|^2 < 1/(h+\delta)$ for all $x > (h+\delta)n$, we have
\[
|r_N - \tilde{r}_N| = |f_N f_{N+1} \cdots f_{n-1}(r_n) - f_N f_{N+1} \cdots f_{n-1}(\tilde{r}_n)| < \frac{|r_n - \tilde{r}_n|}{(h+\delta)^{n-N}} < \frac{n/h}{(h+\delta)^{n-N}}
\]
for all $n \ge N$, hence $r_N = \tilde{r}_N$, which implies that $r_n = \tilde{r}_n$ for all $n \ge 1$.

Next we show that
\[
f_n\big(\alpha (n+1) - c\big) < \alpha n - c + \frac{(\alpha-c)(c-1)}{\alpha n}
\]
and
\[
f_n\Big(\alpha (n+1) - c + \frac{(\alpha-c)(c-1)}{(n+1)\alpha}\Big) > \alpha n - c.
\]
Indeed, using that $\alpha^2 = h \alpha + 1$ and $2\alpha c - h c = 1 + \alpha$, we have
\begin{align*}
(\alpha n + \alpha - c)\, f_n\big(\alpha (n+1) - c\big) & = (hn-1) (\alpha n + \alpha - c)  + n^2 \\
& = (h\alpha+1) n^2 + (h\alpha-hc-\alpha) n - (\alpha-c) \\
& < \alpha^2 n^2 + (\alpha^2 - 2\alpha c) n - (\alpha-c) + \frac{(\alpha-c)^2(c-1)}{\alpha n} \\
& = (\alpha n + \alpha - c) \Big(\alpha n - c + \frac{(\alpha-c)(c-1)}{\alpha n}\Big),
\end{align*}
and
\begin{align*}
\Big(\alpha n + \alpha - c + \frac{(\alpha-c)(c-1)}{\alpha(n+1)}\Big) (\alpha n - c) \hspace{-9em} & \hspace{9em} < \alpha^2 n^2 + (\alpha^2 - 2\alpha c) n - (\alpha-c) - \frac{c(\alpha-c)(c-1)}{\alpha(n+1)} \\
& <  (h\alpha+1) n^2 + (h\alpha-hc-\alpha) n - (\alpha-c) - \frac{(\alpha-c)(c-1)}{\alpha(n+1)} \\
& < (hn-1) \Big(\alpha n + \alpha - c + \frac{(\alpha-c)(c-1)}{\alpha(n+1)}\Big)  + n^2 \\
& = \Big(\alpha n + \alpha - c + \frac{(\alpha-c)(c-1)}{\alpha(n+1)}\Big)\, f_n\Big(\alpha (n+1) - c + \frac{(\alpha-c)(c-1)}{\alpha(n+1)}\Big).
\end{align*}
As $f_n$ is monotonically decreasing for $x > 0$, we deduce that
\[
0 < f_n(x) - \alpha n + c < \frac{(\alpha-c)(c-1)}{\alpha n}
\]
for all $x$ with $0 \le x - \alpha (n+1) + c \le \frac{(\alpha-c)(c-1)}{\alpha(n+1)}$.
Then we also have
\[
0 < f_n f_{n+1} \cdots f_{n+k-1}\big(\alpha (n+k) - c + x\big) - \alpha n + c < \frac{(\alpha-c)(c-1)}{\alpha n}
\]
for all $k,n \ge 1$, $0 \le x - \alpha (n+k) + c \le \frac{(\alpha-c)(c-1)}{\alpha(n+k)}$.
As $f_n$ is contracting for $x \ge \alpha(n+1)-c$, the intervals $[f_1 f_2 \cdots f_n(\alpha (n+1) -c),  f_1 f_2 \cdots f_n(\alpha (n+1) -c+\frac{(\alpha-c)(c-1)}{\alpha(n+1)})]$ converge to a point~$r_1$.
Then the numbers $r_n$ given by \eqref{e:r} satisfy \eqref{e:rn} for all $n \ge 1$.
By the first paragraph of the proof, this is the unique sequence of positive numbers satisfying~\eqref{e:r}.
\end{proof}

\noindent
Now consider when $\alpha n - c + \frac{1}{2}$ is close to $\lceil \alpha n - c + \frac{1}{2}\rceil$.
Let $p_k/q_k$ be the convergents of the regular continued fraction $\alpha = [h;h,h,\ldots]$, i.e., $q_{-1} = 0$, $q_0 = 1$, $q_{k+1} = h q_k + q_{k-1}$ for $k \ge 1$, $p_k = q_{k+1}$.
Then we have
\[
q_k = \frac{\alpha^{k+1}+(-1)^k/\alpha^{k+1}}{\alpha+1/\alpha}
\]
and thus
\begin{equation} \label{e:qp}
q_k \alpha - p_k = \frac{(-1)^k}{\alpha^{k+1}}.
\end{equation}

\begin{lemma}
Let $h$ be a positive integer and $\alpha = \dfrac{h+\sqrt{h^2+4}}{2}$.
Then we have
\[
\lceil \alpha n \rceil - \alpha n =
\left\{\begin{array}{ll}j/\alpha^{2k} & \mbox{if}\ n = j q_{2k-1},\, k \ge 1,\, 1 \le j < \alpha^{2k}, \\[1ex]
(\alpha-1)/\alpha^{2k+1} & \mbox{if}\ n = q_{2k-1} + q_{2k},\, k \ge 0, \\[1ex]
(\alpha+1)/\alpha^{2k+2} & \mbox{if}\ n = q_{2k+1} -  q_{2k},\, k \ge 0,
\end{array}\right.
\]
and $n (\lceil \alpha n \rceil - \alpha n) \ge 1$ for all other $n \ge 1$.
\end{lemma}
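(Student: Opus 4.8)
The plan is to read the quantity $\delta(n):=\lceil \alpha n\rceil-\alpha n$ as a one-sided Diophantine approximation. Writing $p=\lceil \alpha n\rceil$, irrationality of $\alpha$ gives $p>\alpha n$ and $\delta(n)=p-\alpha n$, so that
\[
 n\,\delta(n)<1 \iff 0<\frac pn-\alpha<\frac1{n^2},
\]
i.e.\ $p/n$ is an approximation of $\alpha$ \emph{from above} of quality $1/n^2$. First I would divide out $d=\gcd(p,n)$ and write $p/n=p'/q'$ in lowest terms, $n=dq'$; since $0<\delta(n)<1$ one checks that $p'=\lceil\alpha q'\rceil$, that $\delta(n)=d\,\delta(q')$, and hence that $n\,\delta(n)=d^{2}\,q'\delta(q')$. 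Every $n$ with $n\delta(n)<1$ is thus a multiple $n=dq'$ of a \emph{reduced} from-above approximation with $q'\delta(q')<1$, and the proof splits into (i) listing the reduced $q'$ with $q'\delta(q')<1$ and (ii) deciding which of their multiples survive.

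For the three families the stated values follow by direct substitution into \eqref{e:qp}, which reads $\alpha q_k=p_k+(-1)^k\alpha^{-(k+1)}$. For $n=jq_{2k-1}$ we get $\alpha n=jp_{2k-1}-j\alpha^{-2k}$, so whenever $j<\alpha^{2k}$ the ceiling equals $jp_{2k-1}$ and $\delta(n)=j/\alpha^{2k}$; this identifies the first family with the multiples of the (from-above) odd convergent denominators. Writing $q_{2k-1}+q_{2k}$ and $q_{2k+1}-q_{2k}=q_{2k-1}+(h-1)q_{2k}$ as the intermediate fractions $q_{2k-1}+aq_{2k}$ for $a=1$ and $a=h-1$, the same substitution, together with $\alpha-h=\alpha^{-1}$, yields $\delta=(\alpha-1)/\alpha^{2k+1}$ and $\delta=(\alpha+1)/\alpha^{2k+2}$, the second and third families.

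The heart of the matter is the classification (i): I claim the reduced from-above fractions with $q'\delta(q')<1$ are exactly the odd convergents and, in each block, the two \emph{extreme} intermediate fractions $a\in\{1,h-1\}$. For a general intermediate fraction $q=q_{2k-1}+aq_{2k}$ with $0\le a\le h$, substituting $q_k=\big(\alpha^{k+1}+(-1)^k\alpha^{-(k+1)}\big)/(\alpha+\alpha^{-1})$ and using $\alpha^2=h\alpha+1$, $\alpha+\alpha^{-1}=\sqrt{h^2+4}$ gives the closed form
\[
 q\,\delta(q)=\frac{1+a(h-a)}{\sqrt{h^2+4}}-\frac{(\alpha-a)^2}{\sqrt{h^2+4}\,\alpha^{4k+2}},
\]
whose second term is positive and of size $O(\alpha^{-4k})$. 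Now $1+a(h-a)<\sqrt{h^2+4}$ holds exactly for $a\in\{0,1,h-1,h\}$, so the convergents ($a=0,h$) and the extreme intermediates ($a=1,h-1$) give $q\delta(q)<1$, whereas for every middle value $2\le a\le h-2$ (which forces $h\ge4$) the leading term is at least $(2h-3)/\sqrt{h^2+4}$. The decisive point is thus the inequality $2h-3>\sqrt{h^2+4}$, equivalently $3h^2-12h+5>0$, valid for all integers $h\ge4$ with least quotient $\sqrt5/2\approx1.118$ at $h=4$; I would then verify that this surplus of about $0.12$ dominates the correction $O(\alpha^{-4k})\le\alpha^{-4}$ (note $\alpha\ge2+\sqrt5$ when $h\ge4$), concluding $q\delta(q)\ge1$ for all middle intermediates. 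This nearly sharp competition between $2h-3$ and $\sqrt{h^2+4}$—the real reason the lemma holds—is the step I expect to be the main obstacle, both because the inequality is barely true at $h=4$ and because the exponentially small correction must be bounded carefully. To finish (i) I would exclude reduced from-above fractions that are not intermediate fractions by the classical fact (the Klein-polygon description of a continued fraction) that a reduced $p/q$ with $|\alpha-p/q|<1/q^2$ is necessarily a convergent or an intermediate fraction; any other lattice point above the line $y=\alpha x$ lies beyond a polygon vertex of no larger denominator and hence has $q\delta(q)\ge1$.

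It remains to run step (ii) through $n\delta(n)=d^{2}q'\delta(q')$. For an odd convergent $q'=q_{2k-1}$ one has $q'\delta(q')=q_{2k-1}/\alpha^{2k}<1$, and every multiple $d=j$ with $j<\alpha^{2k}$ leaves the ceiling unwrapped and reproduces $\delta(jq_{2k-1})=j/\alpha^{2k}$, which is precisely the first family (irrespective of whether $n\delta(n)<1$). For the extreme intermediates the same closed form gives $q'\delta(q')>\tfrac14$, so $d^{2}q'\delta(q')>1$ already for $d=2$; only $d=1$ can satisfy $n\delta(n)<1$, and this returns exactly the second and third families. Putting the cases together shows that every $n$ with $n\delta(n)<1$ belongs to one of the three families, which is the contrapositive of the assertion that $n\,(\lceil\alpha n\rceil-\alpha n)\ge1$ for all remaining $n$; combined with the second paragraph this establishes the lemma.
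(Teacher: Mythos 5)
Your proposal is correct in outline, but it takes a genuinely different and more self-contained route than the paper. The paper gets the three formulas from~\eqref{e:qp} exactly as you do, but then disposes of the classification in one stroke by citing Rockett and Sz\"usz (Ch.~2, \S5, Theorem~2): $n(\lceil\alpha n\rceil-\alpha n)\ge 1$ unless $n$ is of the form $jq_k$ (small $j$), $q_k+q_{k-1}$ or $q_k-q_{k-1}$; after that it only has to observe that the even-indexed exceptions approximate $\alpha$ from \emph{below}, so their ceiling-distance exceeds $\tfrac12$ and they satisfy $n(\lceil\alpha n\rceil-\alpha n)\ge1$ once $n\ge2$, with $n=1=q_0+q_{-1}$ already belonging to the second family. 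You instead rebuild the classification from scratch: the reduction $n\delta(n)=d^2q'\delta(q')$, the closed form $q\delta(q)=\frac{1+a(h-a)}{\sqrt{h^2+4}}-\frac{(\alpha-a)^2}{\sqrt{h^2+4}\,\alpha^{4k+2}}$ for the upper intermediate fractions (which I checked and is correct), and the Fatou--Grace description of $1/q^2$-approximations for everything else. This buys insight---it exhibits exactly where the lemma is tight (the middle intermediates at $h=4$, via $2h-3>\sqrt{h^2+4}$)---at the cost of having to exclude those middle intermediates by hand, which the paper's citation does for free. Two details to tighten if you write this up. First, your bound ``$O(\alpha^{-4k})\le\alpha^{-4}$'' on the correction term fails at $k=0$, where the correction is $(\alpha-a)^2/(\sqrt{h^2+4}\,\alpha^2)$; the conclusion still holds there, since at $k=0$ the intermediate is simply $q=a$ and $a\,\delta(a)=a(\alpha-a)/\alpha$, which is minimized at $a=2$ or $a=h-2$ and is $\ge 2-4/\alpha\ge1$ for $h\ge4$, but this case needs its own line. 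Second, in the odd-convergent case you should record that $n\delta(n)<1$ forces $d^2<\sqrt{h^2+4}/(1-\alpha^{-4k})$ and hence $d<\alpha^{2k}$, so that such $n$ really does fall into the first family rather than outside the stated range of $j$.
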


\begin{proof}
The formulas for $n = j q_{2k-1}$, $n = q_{2k-1} + q_{2k}$ and $n = q_{2k+1} -  q_{2k}$ are immediate from~\eqref{e:qp}.
By \cite[Ch.~2, \S 5, Theorem~2]{RockettSzusz}, we have $n (\lceil \alpha n \rceil - \alpha n) \ge 1$ for all $n \ge 1$ that are not of the form $j q_k$, $1 \le j < \alpha / \sqrt{h}$, $q_k+q_{k-1}$ or $q_k-q_{k-1}$.
Since $\alpha q_{2k} - \lfloor \alpha q_{2k} \rfloor = 1 /\alpha^{2k+1}$, $\alpha (q_{2k} + q_{2k+1}) - \lfloor \alpha (q_{2k} + q_{2k+1}) \rfloor = (\alpha-1) /\alpha^{2k+2}$ and $\alpha (q_{2k} - q_{2k-1}) - \lfloor \alpha (q_{2k} - q_{2k-1}) \rfloor = (\alpha+1) /\alpha^{2k+1}$, we have $\lceil \alpha n \rceil - \alpha n > 1/2$ for $n = jq_{2k}$, $n = q_{2k} + q_{2k+1}$ and $n = q_{2k} - q_{2k-1}$.
If moreover $n \ge 2$, then we have thus $n (\lceil \alpha n \rceil - \alpha n) \ge 1$ for these $n$ as well.
Since $q_0 + q_{-1} = 1$, the case $n = 1$ has already been treated.
\end{proof}

\noindent
We obtain that
\[
n \big(\lceil \alpha n \rceil - \alpha n\big) =
\left\{\begin{array}{ll}\displaystyle\frac{j^2(1-1/\alpha^{4k})}{\sqrt{h^2+4}} & \mbox{if}\ n = j q_{2k-1},\, k \ge 1,\, 1 \le j < \alpha^{2k}, \\[1ex]
\displaystyle\frac{h-(\alpha-1)^2/\alpha^{4k+2}}{\sqrt{h^2+4}} & \mbox{if}\ n = q_{2k-1} + q_{2k},\, k \ge 0, \\
\displaystyle\frac{h-(\alpha+1)^2/\alpha^{4k+4}}{\sqrt{h^2+4}} & \mbox{if}\ n = q_{2k+1} -  q_{2k},\, k \ge 0.\end{array}\right.
\]
The worst case for $n = q_{2k-1} + q_{2k}$ or $n = q_{2k+1} -  q_{2k}$ is given by $n = q_{-1} + q_0 = 1$, hence
\[
n \big(\lceil \alpha n \rceil - \alpha n\big) \ge h+1-\alpha = 1-\frac{1}{\alpha}
\]
for all $n \ge 1$ such that $n \ne q_{2k-1}$ for all $k \ge 1$.

Now we come back to the case $h = 2$ and consider the distance of $\alpha n - c + \frac{1}{2}$ to the nearest integer above $\alpha n - c + \frac{1}{2}$.
Note that $c-\frac{1}{2} = \frac{1}{\sqrt{2}}$.
We have
\[
2 \Big(\Big\lceil \alpha n - \frac{1}{\sqrt{2}}\Big\rceil - \alpha n + \frac{1}{\sqrt{2}}\Big) = 2 \Big\lceil \alpha n - \frac{1}{\sqrt{2}}\Big\rceil -1 - \alpha (2n-1) \ge \lceil \alpha (2n-1)\rceil - \alpha (2n-1) > \frac{\alpha-1}{2\alpha n},
\]
where we have used that $q_{2k-1}$ is even for all $k \ge 1$, thus
\[
\Big\lceil \alpha n - \frac{1}{\sqrt{2}}\Big\rceil - \alpha n + \frac{1}{\sqrt{2}} > \frac{\alpha-1}{4\alpha n}.
\]
Since $(\alpha-c)(c-1) = \displaystyle\frac{1}{4\alpha}$, we have
\[
\alpha n - \frac{1}{\sqrt{2}} < r_n+\frac{1}{2} < \alpha n - \frac{1}{\sqrt{2}} +\frac{1}{4\alpha n} < \alpha n - \frac{1}{\sqrt{2}} +\frac{\alpha-1}{4\alpha n} < \Big\lceil \alpha n - \frac{1}{\sqrt{2}}\Big\rceil
\]
for all $n \ge 1$, thus $d_n = e_n$.
This completes the proof of Theorem \ref{thm}.

\medskip\noindent
We remark that $h = 2$ cannot be replaced by an arbitrary positive integer in the previous paragraph.
For example, for $h = 1$, we have $\alpha = \frac{1+\sqrt{5}}{2}$, $c = \frac{\alpha^2}{\sqrt{5}}$, $\lfloor 137 \alpha - c + \frac{1}{2} \rfloor = 220$ and $\lfloor r_{137} + \frac{1}{2} \rfloor = 221$.
However, computer simulations suggest that (for any $h$) we always have $\lfloor \alpha n - c \rfloor = \lfloor r_n \rfloor$.

\section{Acknowledgement}
\noindent
The authors gratefully acknowledge the organization of the Lorentz Center,
Leiden, workshop {\it Aperiodic Patterns in Crystals, Numbers and Symbols},
in particular Robbert Fokkink, for bringing them together in a stimulating
atmosphere.

The third author was supported by the ANR project `Dyna3S' (ANR-13-BS02-0003).

\end{document}